\theoremstyle{definition}
\theoremstyle{plain}
\newtheorem{proposition}{Proposition}
\newtheorem{lemma}[proposition]{Lemma}
\newtheorem{theorem}[proposition]{Theorem}
\theoremstyle{definition}
\title{Some Infinitary Paradoxes and Undecidable Sentences in Peano Arithmetic}
\author{Cheng Ka Yue\\[0.5em] chengkayue@gmail.com}
\date{}
\begin{document}
\maketitle 
\section{Introduction}

In \cite{Chaitin1995-Berry} there is a conversation between Gregory Chaitin and Kurt G\"odel:

\begin{quote}
[Chaitin] said, ``Professor G\"odel, I'm fascinated by your incompleteness theorem. I have a new proof based on Berry paradox that I'd like to tell you about." G\"odel said, ``It doesn't matter which paradox you use."
\end{quote}

To support this claim, we need to investigate what will happen if we formalize different paradoxes in Peano arithmetic (PA). Most notably, Chaitin proved a version of the First Incompleteness Theorem with a proof resembling the Berry paradox in his \cite{Chaitin1970-CHACCA}, so did George Boolos gave his proof using the same paradox (independently) in \cite{Boolos1989-Godel}.

In this paper\footnote{This paper was presented in Logic Colloquium 2015, and the results are from my mater's thesis \cite{Cheng2015thesis}.}, I will present a few infinitary paradoxes and corresponding undecidable sentences. The first three paradoxes are developed, in my master thesis, from a version of the Preface paradox, and the last one is an infinite version of the Surprise Examination paradox from \cite{Sorensen1993-SORTEU}.

We will work in the usual first order Peano arithmetic, though in fact the results hold in any theory that extends PA. The non-logical symbols in the language are the only constant symbol $0$, a unary function symbol $S$ and two binary function symbols $+$ and $\times$.

The technique being used to produce undecidable sentences in this paper, involving a general version of the Diagonal Lemma, is mainly from \cite{Cieslinski2013-CIEGTY}.

\section{Preliminaries}

In this section, I will state a few facts and definitions that are useful in this paper, proofs of those facts and details of the arithmetization of syntax will be skipped. These details can be found in books about G\"odel's Incompleteness Theorems, for examples, \cite{Smullyan1992-SMUGIT} and \cite{Smith2007-Intro}.

There are formulas in PA that are said to be  \emph{provable}. If a formula $\varphi$ is provable in Peano arithmetic, we will denote this fact by $\vdash \varphi$. Then we have some definitions:

\begin{enumerate}
\item A formula $\varphi$ is said to be \emph{refutable} if the negation of it, $\neg \varphi$, is provable.

\item A formula is decidable if it is provable or refutable, otherwise it is undecidable. Hence a formula $\varphi$ is undecidable if neither $\varphi$ nor $\neg \varphi$ is provable.

\item Two formulae $\varphi$ and $\psi$ are \emph{provable equivalent} if the formula $\varphi \longleftrightarrow \psi$ is provable.

\item A quantifier is \emph{bounded} in a formula if it is of the form $\exists x (x < t \land \varphi)$ or $\forall x (x < t \rightarrow \varphi)$, where $t$ is a term, and we will write $(\exists x < t)\varphi$ and $(\forall x <t) \varphi$ respectively.

\item A formula is a \emph{$\Delta_0$ formula} if it is provably equivalent to a formula containing only bounded quantifiers.

\item A formula is a \emph{$\Sigma_1$ formula} if it is provably equivalent to a formula of the form $\exists x \varphi$, where $\varphi$ is a $\Delta_0$ formula.
\end{enumerate}

We say that a theory is \emph{consistent} if there is no formula $\varphi$ such that both $\varphi$ and $\neg \varphi$ are provable. And we say that a theory is $\omega$-consistent if there is no open formula $\varphi(x)$ such that $\exists x \varphi(x)$ is provable, but for every natural number $n$, $\varphi(n)$ is not provable. In this paper we assume PA is both consistent and $\omega$-consistent.\footnote{The latter actually implies the former and can be replaced by weaker a weaker condition called 1-consistency (or $\Sigma_1$ soundness), which also implies consistency.}

The following corollary of the assumption of $\omega$-consistency is useful:
\begin{lemma} \label{omega-consist-lemma}
Let $\varphi(x)$ be a $\Sigma_1$ formula with a free variable $x$. If $\exists x \varphi(x)$ is provable, then there is a number $n$ such that $\varphi(n)$ is provable.\end{lemma}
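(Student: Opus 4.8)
The plan is to collapse the $\Sigma_1$ formula to a single existential quantifier over a $\Delta_0$ matrix, use $\omega$-consistency to produce a candidate numeral, and then exploit the decidability of $\Delta_0$ sentences to turn that candidate into an actual witness.

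First I would put $\varphi$ into normal form. By the definition of $\Sigma_1$, $\varphi(x)$ is provably equivalent to $\exists y\,\delta(x,y)$ for some $\Delta_0$ formula $\delta$, so $\exists x\,\varphi(x)$ is provably equivalent to $\exists x\,\exists y\,\delta(x,y)$. Fixing a $\Delta_0$-definable pairing function with projections $(z)_0$ and $(z)_1$, the latter is provably equivalent to $\exists z\,\delta^\ast(z)$, where $\delta^\ast(z) := \delta((z)_0,(z)_1)$ is again $\Delta_0$. (One may instead avoid pairing by using a single bound, setting $\delta^\ast(z) := (\exists x < z)(\exists y < z)\,\delta(x,y)$.) Since $\vdash \exists x\,\varphi(x)$ and provable equivalence preserves provability, we obtain $\vdash \exists z\,\delta^\ast(z)$.

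Next I would apply $\omega$-consistency to $\delta^\ast$: because $\exists z\,\delta^\ast(z)$ is provable, $\neg\delta^\ast(m)$ cannot be provable for every numeral $m$, so there is an $n$ with $\not\vdash \neg\delta^\ast(n)$. Now $\delta^\ast(n)$ is a $\Delta_0$ sentence, and such sentences are decidable in PA, i.e. $\vdash \delta^\ast(n)$ or $\vdash \neg\delta^\ast(n)$; since the second option is ruled out, $\vdash \delta^\ast(n)$. Decoding $n$ gives concrete values $a = (n)_0$ and $b = (n)_1$ with $\vdash (n)_0 = a$ and $\vdash (n)_1 = b$, hence $\vdash \delta^\ast(n) \leftrightarrow \delta(a,b)$ and so $\vdash \delta(a,b)$. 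Existential introduction then yields $\vdash \exists y\,\delta(a,y)$, that is $\vdash \varphi(a)$, and $a$ is the required witness.

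The only genuinely nontrivial ingredient, and hence the main obstacle, is the decidability of $\Delta_0$ sentences in PA together with the fact that an explicit witness can actually be extracted from it; this is what lets $\not\vdash \neg\delta^\ast(n)$ be upgraded to $\vdash \delta^\ast(n)$ and then to $\vdash\varphi(a)$, rather than to the mere consistency of $\delta^\ast(n)$ with PA. This decidability is part of the representability theory for bounded formulae developed in the arithmetization of syntax, which the paper defers to its references. If one uses the bounded-search version of $\delta^\ast$ rather than pairing, there is one additional small step: one needs that a provable bounded disjunction $\bigvee_{a,b<n}\delta(a,b)$ of decidable $\Delta_0$ sentences must have a provable disjunct, which follows from decidability together with consistency. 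Everything else is routine manipulation of provable equivalences and quantifier rules.
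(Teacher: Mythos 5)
Your proof is correct and follows essentially the same route as the paper, which dispatches this lemma in one sentence by citing exactly the two ingredients you use: the definition of $\omega$-consistency and the decidability of $\Delta_0$ sentences. Your write-up simply fills in the details the paper leaves implicit (collapsing to a single $\Delta_0$ matrix, upgrading $\not\vdash \neg\delta^{\ast}(n)$ to $\vdash \delta^{\ast}(n)$, and extracting the witness from a provable bounded disjunction), and does so correctly.
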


This result simply follows from the definition of $\omega$-consistency and the fact that all $\Delta_0$ formulae are decidable.

Another lemma about $\Sigma_1$ formulae is also useful:
\begin{lemma} \label{sigma-1-lemma}
If $\varphi$ is a $\Sigma_1$ formula, then for any variable $x$, $\exists x \varphi$ is also a $\Sigma_1$ formula.
\end{lemma}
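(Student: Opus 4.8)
The plan is to prove that if $\varphi$ is a $\Sigma_1$ formula, then so is $\exists x \varphi$, by reducing a block of two existential quantifiers to a single one via a pairing function. First I would unwind the definition: since $\varphi$ is $\Sigma_1$, it is provably equivalent to a formula of the shape $\exists y \, \psi$, where $\psi$ is a $\Delta_0$ formula. Hence $\exists x \varphi$ is provably equivalent to $\exists x \exists y \, \psi$, and the task becomes showing that this double existential is again $\Sigma_1$, i.e. provably equivalent to a single existential over a $\Delta_0$ matrix. I would want to be slightly careful about variable clashes: if $x$ happens to occur in the chosen witnessing form $\exists y\,\psi$, I would first rename to keep the quantifiers genuinely distinct, using the fact (needing only predicate logic) that provable equivalence is preserved under such renaming.

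The key step is to collapse $\exists x \exists y$ into a single quantifier. The standard device is an arithmetical pairing function: PA proves the existence of a $\Delta_0$-definable bijection between $\mathbb{N}$ and $\mathbb{N}^2$ together with its $\Delta_0$-definable projections $\pi_1, \pi_2$. Concretely, one can use Cantor's pairing polynomial or the G\"odel $\beta$-function to obtain a term or $\Delta_0$ formula coding pairs. Then I would argue that
\[
\exists x \exists y \, \psi(x,y) \quad\text{is provably equivalent to}\quad \exists z \,\bigl(\exists x < z\bigr)\bigl(\exists y < z\bigr)\bigl(\langle x,y\rangle = z \land \psi(x,y)\bigr),
\]
where the bounding by $z$ turns the inner existentials into bounded quantifiers. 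The crucial observation is that the new matrix, namely $(\exists x<z)(\exists y<z)(\langle x,y\rangle = z \land \psi)$, contains only bounded quantifiers: the two existentials are explicitly bounded, and $\psi$ was already $\Delta_0$, so by the definitions in the preliminaries this whole matrix is $\Delta_0$. The outer $\exists z$ then exhibits $\exists x \exists y\,\psi$ in the required $\Sigma_1$ form.

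The substantive content, and where I expect the main obstacle to lie, is verifying the provable equivalence in PA rather than merely truth in the standard model. For the forward direction, PA must prove that whenever $x$ and $y$ exist satisfying $\psi$, their code $z = \langle x,y\rangle$ exists and dominates both $x$ and $y$ — this requires that the pairing function provably satisfies $x < \langle x,y\rangle$ and $y < \langle x,y\rangle$ (or at most a fixed adjustment), which holds for the usual $\Delta_0$ pairing schemes but must be checked. For the converse, given such a $z$ one extracts $x$ and $y$ via the projections and recovers $\psi$. Both directions are routine once the coding lemmas for the pairing function are in hand, but they do depend on those lemmas being genuinely provable in PA, not just semantically valid. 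An alternative that sidesteps the explicit pairing arithmetic is to appeal directly to the well-known closure of the $\Sigma_1$ class under existential quantification (the $\Sigma_1$ collection/contraction lemma), but since the present definitions frame $\Sigma_1$ purely syntactically up to provable equivalence, the cleanest self-contained route is the pairing argument sketched above.
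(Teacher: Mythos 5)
Your argument is correct in substance, but it is heavier than it needs to be, and it differs from the route the paper relies on. The paper gives no proof of this lemma at all: it defers to Smullyan, where the contraction of $\exists x \exists y\,\psi$ to a single unbounded existential is done \emph{without any pairing function}, simply by observing that $\exists x \exists y\, \psi(x,y)$ is provably equivalent to $\exists z\, (\exists x < z)(\exists y < z)\, \psi(x,y)$ --- in the forward direction one takes $z$ to be any number exceeding both witnesses (e.g.\ $\max(x,y)+1$), and the matrix $(\exists x<z)(\exists y<z)\,\psi$ is $\Delta_0$ outright since $\psi$ is. This avoids every coding lemma you list as the ``main obstacle'': no bijection, no projections, no verification that the pair code dominates its components. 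Your pairing version does work, but note the wrinkle you half-flag yourself: for the Cantor pairing $\langle 0,0\rangle = 0$, so $x < \langle x,y\rangle$ fails and you must either switch to $\leq$ (while the paper's bounded quantifiers are defined with strict $<$) or bound by $\langle x,y\rangle + 1$; and the clause $\langle x,y\rangle = z$ must itself be checked to be $\Delta_0$ as a relation. So: correct, and your instinct that contraction of quantifiers is the whole content is right, but the $\max$-based contraction is the more elementary argument and is the one the cited source actually uses; the pairing machinery buys you nothing here except extra obligations to discharge. Your handling of variable renaming and of the distinction between truth and provable equivalence in PA is appropriate and worth keeping in either version.
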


A proof of this lemma can be found in \cite{Smullyan1992-SMUGIT}.

We can encode each finite sequence of natural numbers into a natural number, call the \emph{code} of the sequence, in a way that we can also decode that number and obtain the original sequence. A number which is the code of a finite sequence is called a \emph{code number}.

Then we assign different numbers to the symbols in our object language, hence every expression corresponds to a finite sequence, which can be encoded into a natural number. Such a number is called the \emph{G\"odel number} of that expression. Let $\varphi$ be a formula, the G\"odel number of $\varphi$ will be denoted by $\ulcorner \varphi \urcorner$.

After that, (syntactical) properties and relations of expressions correspond to properties and relations of the G\"odel numbers of expressions. Then we can construct the following predicates and functions\footnote{Since we usually do not include function symbols other than $S$, $+$ and $\times$, ``$f(x)=y$" should be understood as a relation of $x$ and $y$ and ``$f(x)$" should be understood as pterms (p for pseudo) in \citep{Boolos1993-BOOTLO-3}.}:

\begin{enumerate}
\item $Code(x)$ is provable if $x$ is a code number.
\item $l(x)=n$ is provable if $x$ is a code number of a sequence with length $n$.
\item $Dec(x,k)=y$ is provable if $x$ is a code number and the $k^{th}$ term of the sequence encoded by $x$ is $y$.
\item $Neg(x)$ is a function such that $Neg(\ulcorner \varphi \urcorner)=\ulcorner \neg \varphi \urcorner$ is provable for any formula $varphi$.\footnote{In fact for any expressions, not restricted to formulas, but here we only concern formulas.}
\item $Subs(x,v,y)$ is a function such that if for any formula $\varphi$, term $t$, variable $v_i$ free in $\varphi$, then $Sub(\ulcorner \varphi \urcorner, \ulcorner v_i \urcorner, \ulcorner t \urcorner)=\ulcorner \varphi(t/v_i) \urcorner$, where $\varphi(t/v_i)$ is the formula obtained from substituting all free occurrence of $v_i$ in $\varphi$ by $t$, is provable.
\end{enumerate}

The above relations are $\Delta_0$. We also have an open $\Sigma_1$ formula $Prov(x)$ with one free variable satisfying the following two lemmas:

\begin{lemma}[$Prov$-introduction] \label{Prov-intro}
If $\varphi$ is a provable formula, then $Prov(\ulcorner \varphi \urcorner)$ is provable.
\end{lemma}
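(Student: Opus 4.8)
The plan is to unwind the definition of the $\Sigma_1$ predicate $Prov$ and then exhibit an explicit witness. Writing $Prov(x)$ in its $\Sigma_1$ normal form, there is a $\Delta_0$ formula $Proof(p,x)$ --- the arithmetized statement ``$p$ is the code of a PA-derivation whose last line has G\"odel number $x$'' --- such that $\vdash Prov(x) \longleftrightarrow \exists p\, Proof(p,x)$ by the definition of $\Sigma_1$. So it suffices, given a provable $\varphi$, to produce a number $p$ with $\vdash Proof(\overline{p}, \ulcorner \varphi \urcorner)$ (where $\overline{p}$ is the numeral for $p$) and then apply existential generalization.

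First I would use the hypothesis $\vdash \varphi$ at the metalevel: by the definition of provability there is an actual finite derivation $\varphi_1, \dots, \varphi_n = \varphi$ in PA, where each $\varphi_i$ is an axiom or is obtained from earlier lines by a rule of inference. This derivation is a concrete finite object, so it has a definite code number $p$, and by the correctness of the arithmetization the relation ``$p$ codes a proof of $\ulcorner \varphi \urcorner$'' genuinely holds of the particular numbers $p$ and $\ulcorner \varphi \urcorner$.

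Next I would pass from this true instance to provability. Since $Proof$ is $\Delta_0$, the closed sentence $Proof(\overline{p}, \ulcorner \varphi \urcorner)$ is decidable, that is, provable or refutable. Because $Proof$ was constructed to represent the genuine proof relation, the instance that holds at the metalevel is the provable one rather than the refutable one; hence $\vdash Proof(\overline{p}, \ulcorner \varphi \urcorner)$. The logical rule of existential introduction then gives $\vdash \exists p\, Proof(p, \ulcorner \varphi \urcorner)$, and the provable equivalence above yields $\vdash Prov(\ulcorner \varphi \urcorner)$, as required.

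Finally, a word on where the content sits. Every step but one is routine: the metalevel existence of the derivation, the existential generalization, and the $\Sigma_1$ unwinding are all immediate. The single load-bearing fact is that the $\Delta_0$ matrix $Proof$ correctly represents the actual proof relation, so that true numerical instances are provable --- equivalently, $\Delta_0$-completeness applied to this one true instance, using that $\Delta_0$ formulae are decidable. This is exactly the portion of the arithmetization of syntax that the preliminaries defer to \cite{Smullyan1992-SMUGIT} and \cite{Smith2007-Intro}, and I expect it to be the only nontrivial ingredient; granting it, the lemma is a two-line consequence.
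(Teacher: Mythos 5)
Your proposal is correct: the paper itself states this lemma without proof, deferring the arithmetization details to \cite{Smullyan1992-SMUGIT} and \cite{Smith2007-Intro}, and your argument is exactly the standard one those sources give --- extract the concrete metalevel derivation, code it as a witness $p$, use the representability of the $\Delta_0$ proof relation (i.e.\ $\Delta_0$-completeness on the one true instance) to get $\vdash Proof(\overline{p},\ulcorner\varphi\urcorner)$, and existentially generalize. You also correctly isolate the single non-routine ingredient, namely that true instances of the $\Delta_0$ matrix are provable, which is precisely the part the paper's preliminaries take as given.
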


\begin{lemma}[$Prov$-elimination] \label{Prov-elim}
If Peano arithmetic is $\omega$-consistent, and $\varphi$ is a formula such that $Prov(\ulcorner \varphi \urcorner)$ is provable, then $\varphi$ is provable.
\end{lemma}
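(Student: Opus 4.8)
The plan is to unpack the provability predicate as a bounded search and then use $\omega$-consistency to convert the formal existence of a proof into an actual one. Recall that $Prov(x)$ arises from arithmetizing the proof relation: there is a $\Delta_0$ relation $Proof(y,x)$ expressing that $y$ is the code of a derivation whose conclusion is the formula with G\"odel number $x$, and $Prov(x)$ is defined as $\exists y\, Proof(y,x)$. My starting point is therefore the hypothesis $\vdash Prov(\ulcorner \varphi \urcorner)$, that is, $\vdash \exists y\, Proof(y, \ulcorner \varphi \urcorner)$.

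First I would extract a concrete witness. The formula $Proof(y, \ulcorner \varphi \urcorner)$ has the single free variable $y$ and is $\Delta_0$, hence in particular $\Sigma_1$. Since its existential closure is provable by hypothesis, Lemma \ref{omega-consist-lemma} --- which is exactly where the assumption of $\omega$-consistency enters --- yields a specific natural number $n$ with $\vdash Proof(n, \ulcorner \varphi \urcorner)$.

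Next I would transfer this provability statement into a fact about the metatheory. The sentence $Proof(n, \ulcorner \varphi \urcorner)$ is a closed $\Delta_0$ sentence, and such sentences are decided by PA in accordance with their truth value: because $Proof$ numeralwise represents the genuine proof relation and PA is consistent, a provable $\Delta_0$ sentence must be true (PA cannot prove the false member of the decidable pair). Hence $Proof(n, \ulcorner \varphi \urcorner)$ holds in reality, which means that $n$ really is the code of a derivation of $\varphi$. Therefore $\varphi$ is provable, as required.

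The hard part, conceptually, is the passage from $\vdash \exists y\, Proof(y, \ulcorner \varphi \urcorner)$ to the existence of a \emph{standard} witness $n$. Provability of the existential does not by itself guarantee a numeral witness: a merely consistent theory could prove that a proof exists while refusing to prove $Proof(m, \ulcorner \varphi \urcorner)$ for every standard $m$, the purported witness living only among the nonstandard elements of some model. It is precisely $\omega$-consistency, packaged in Lemma \ref{omega-consist-lemma}, that excludes such phantom proofs and forces a genuine one; every remaining step is routine once one knows that the $\Delta_0$ relation $Proof$ correctly represents derivability.
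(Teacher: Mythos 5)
Your proof is correct, and it is the standard argument that the paper itself omits (the lemma is stated in the preliminaries, whose proofs are explicitly skipped and deferred to Smullyan and Smith). You correctly identify the two essential ingredients: unpacking $Prov(x)$ as $\exists y\, Proof(y,x)$ with $Proof$ a $\Delta_0$ relation so that Lemma \ref{omega-consist-lemma} (i.e.\ $\omega$-consistency) yields a standard witness $n$ with $\vdash Proof(n, \ulcorner \varphi \urcorner)$, and then using the fact that $Proof$ numeralwise represents the genuine proof relation together with consistency to conclude that $n$ really codes a derivation of $\varphi$; your closing remark about nonstandard ``phantom'' witnesses pinpoints exactly why mere consistency would not suffice.
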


Since we assume the consistency and $\omega$-consistency of PA, $Prov(\ulcorner \varphi \urcorner)$ is provable if and only if $\varphi$ is provable for any formula $\varphi$.

Finally we need two more lemmas. The first one is a generalized version of the usual Diagonal Lemma, the proof of it can be found in \cite{Boolos1993-BOOTLO-3}:
\begin{lemma}[Generalized Diagonal Lemma] \label{Diag-lem}
Let $\varphi(x,y)$ be an open formula with two free variables $x,y$, then there is an open formula $\psi(x)$ with one free variable $x$ such that $\psi(x) \longleftrightarrow \varphi(x, \ulcorner \psi(x) \urcorner)$ is provable.
\end{lemma}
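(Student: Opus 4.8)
The plan is to adapt the standard proof of the Diagonal Lemma, carrying the extra free variable $x$ as an inert parameter throughout the construction. The engine of the usual proof is a \emph{diagonalization function} built from $Subs$, and the only new point here is to diagonalize on a second, designated variable while leaving $x$ untouched.

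First I would fix a designated variable $v$ (distinct from $x$) and introduce the arithmetized numeral function $num$, where $num(\ulcorner \theta \urcorner)$ is the G\"odel number of the numeral naming $\ulcorner \theta \urcorner$; like the functions listed in the Preliminaries it is $\Delta_0$ and provably behaved at standard values. Composing it with $Subs$, I define a $\Delta_0$ relation $Diag(w,z)$ expressing ``$z$ is the code of the formula obtained from the formula coded by $w$ by substituting the numeral of $w$ for the free variable $v$''; concretely $Diag(w,z)$ asserts $Subs(w,\ulcorner v\urcorner, num(w)) = z$. The key property I need is that this relation is \emph{provably functional at standard inputs}: for each formula $\theta$ with free variable $v$, writing $d = \ulcorner \theta(\overline{\ulcorner\theta\urcorner}/v)\urcorner$, PA proves both $Diag(\ulcorner\theta\urcorner, d)$ and $\forall z\,(Diag(\ulcorner\theta\urcorner, z) \rightarrow z = d)$.

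Next, given $\varphi(x,y)$, I would form the formula with free variables $x$ and $v$,
$$B(x,v) := \exists z\,\bigl(Diag(v,z) \wedge \varphi(x,z)\bigr),$$
let $b := \ulcorner B(x,v)\urcorner$, and define $\psi(x) := B(x, \overline{b})$, the result of substituting the numeral $\overline{b}$ for $v$; note that $\psi$ retains exactly the one free variable $x$. Writing $p := \ulcorner \psi(x)\urcorner$, the defining property of $Diag$ gives $p = \ulcorner B(x,\overline{b})\urcorner$, so PA proves $Diag(\overline{b}, \overline{p})$ together with the uniqueness clause $\forall z\,(Diag(\overline{b},z)\rightarrow z = \overline{p})$. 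Substituting $\overline{b}$ for $v$ in $B$ and using these two provable facts, the existential quantifier in $\psi(x)$ collapses: PA proves $\psi(x) \longleftrightarrow \varphi(x, \overline{p})$, which is exactly $\psi(x) \longleftrightarrow \varphi(x, \ulcorner \psi(x)\urcorner)$, as required. Since none of these manipulations bind or substitute $x$, the equivalence holds with $x$ free throughout.

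The main obstacle is not the logical shape of the argument but the syntactic bookkeeping around substitution: I must ensure the designated variable $v$ is genuinely distinct from $x$ and is not accidentally captured when $\overline{b}$ is substituted, and I must have the representation of $Diag$ strong enough that its functionality (the uniqueness clause) is provable in PA, not merely true in the standard model. Establishing that provable functionality from the $\Delta_0$ definitions of $num$ and $Subs$ is the one place that requires genuine care; everything else is routine unwinding of the definition of $\psi$.
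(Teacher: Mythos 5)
Your proof is correct and is essentially the standard parametrized diagonalization argument: the paper itself omits the proof, deferring to \cite{Boolos1993-BOOTLO-3}, and the construction there is exactly your $Diag$-based one with the extra variable $x$ carried along as an inert parameter. Your use of $Subs$ and the numeral function matches the paper's conventions, and the point you flag --- provable functionality of $Diag$ at the standard input --- is indeed the only step needing care.
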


The second one is a consequence of G\"odel's Second Incompleteness Theorem:
\begin{lemma} \label{unprov-unprov}
Let $\varphi$ be a sentence, then $\neg Prov(\ulcorner \varphi \urcorner)$ is not provable.
\end{lemma}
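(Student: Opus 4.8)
The plan is to argue by contradiction and reduce the statement for an arbitrary sentence $\varphi$ to the single instance that the Second Incompleteness Theorem directly forbids, namely the unprovability of the consistency statement $\mathrm{Con}(\mathrm{PA})$. Recall that $\mathrm{Con}(\mathrm{PA})$ can be taken to be the sentence $\neg Prov(\ulcorner \bot \urcorner)$, where $\bot$ is some fixed refutable sentence such as $0 = S0$; the Second Incompleteness Theorem tells us that, since PA is consistent, $\neg Prov(\ulcorner \bot \urcorner)$ is not provable. The whole task is therefore to show that the provability of $\neg Prov(\ulcorner \varphi \urcorner)$ for an arbitrary $\varphi$ would force the provability of this one sentence.

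So suppose, for contradiction, that $\vdash \neg Prov(\ulcorner \varphi \urcorner)$. First I would record the easy half: because $\bot$ is refutable we have $\vdash \neg \bot$, and hence $\vdash \bot \rightarrow \varphi$ holds as a propositional consequence (ex falso). Next I would pass this implication inside the provability predicate. By $Prov$-introduction (Lemma \ref{Prov-intro}) applied to $\bot \rightarrow \varphi$, together with the fact that formalized provability is closed under modus ponens, PA proves $Prov(\ulcorner \bot \urcorner) \rightarrow Prov(\ulcorner \varphi \urcorner)$. Taking the contrapositive inside PA gives $\vdash \neg Prov(\ulcorner \varphi \urcorner) \rightarrow \neg Prov(\ulcorner \bot \urcorner)$, and combining this with the assumption $\vdash \neg Prov(\ulcorner \varphi \urcorner)$ yields $\vdash \neg Prov(\ulcorner \bot \urcorner)$, i.e. $\vdash \mathrm{Con}(\mathrm{PA})$. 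This contradicts the Second Incompleteness Theorem, completing the reductio.

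The step I expect to be the main obstacle is the internalization in the previous paragraph: moving from the external fact $\vdash \bot \rightarrow \varphi$ to the \emph{provability} of the formalized implication $Prov(\ulcorner \bot \urcorner) \rightarrow Prov(\ulcorner \varphi \urcorner)$. This is stronger than the externally stated $Prov$-introduction and $Prov$-elimination rules; it rests on the second Hilbert--Bernays--L\"ob derivability condition (that the provability predicate respects modus ponens), which belongs to the arithmetization of syntax whose details the paper has suppressed. Care is also needed that the $\bot$ used here is exactly the refutable sentence for which $\mathrm{Con}(\mathrm{PA})$ is formulated, so that the final contradiction is genuinely with the canonical statement ruled out by the Second Incompleteness Theorem.

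Alternatively, one can sidestep spelling out the derivability condition by invoking L\"ob's Theorem: from $\vdash \neg Prov(\ulcorner \varphi \urcorner)$ the tautology $\neg P \rightarrow (P \rightarrow Q)$ gives $\vdash Prov(\ulcorner \varphi \urcorner) \rightarrow \varphi$, whence L\"ob yields $\vdash \varphi$, and then $Prov$-introduction gives $\vdash Prov(\ulcorner \varphi \urcorner)$, contradicting consistency. This route is cleaner, but it trades the Second Incompleteness Theorem for a strengthening of it, so for the present exposition the reduction above is the more faithful argument.
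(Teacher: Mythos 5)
Your proposal is correct and matches the paper's intent: the paper offers no explicit proof, stating only that the lemma is ``a consequence of G\"odel's Second Incompleteness Theorem,'' and your reduction of $\vdash \neg Prov(\ulcorner \varphi \urcorner)$ to $\vdash \neg Prov(\ulcorner \bot \urcorner)$ via the internalized ex falso implication is exactly the standard way to cash out that claim. Your caveat that the internalization step needs the second derivability condition, which goes beyond the externally stated $Prov$-introduction rule, is well placed but unproblematic here, since the Second Incompleteness Theorem itself already presupposes those conditions for the chosen $Prov$.
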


\section{The Paradoxes}

In this section I will present four infinitary paradoxes, the first three of them are from my master thesis, though there are some similar finite version in the literature, I cannot find any name for the infinitary ones. The last one is called the Earliest Class Inspection paradox from \cite{Sorensen1993-SORTEU}, as noted in the introduction.

Imagine there are infinitely many people in a room, each of them say one and only one sentence. The following three situations correspond to the first three paradoxes.

\subsection*{Paradox 1: Someone is wrong.}

If everyone in the room says ``Someone is wrong" \footnote{To be more precise, it should be ``At least one sentence uttered in this room is false". But for convenience, we simply talk about people being right and wrong instead of the sentences they uttered is true or false.}, then it is impossible for everyone to be right, otherwise none of them is wrong, contradicting their claims. Hence someone must be wrong, but that person also says ``Someone is wrong", so ``No one is wrong" is true, contradicting him or her being wrong.

\subsection*{Paradox 2: Someone else is wrong.}

If everyone in the room says ``Someone else is wrong", then this situation is slightly more complicated. It is consistent that there is exactly one person being wrong, while the rest of them are right. Furthermore, since everyone says the same thing and it is a symmetric situation, it does not matter that which one is wrong. So the truth value assignments of their sentence is arbitrary in this sense, which is similar to the Truth-teller paradox, i.e. ``This sentence is true".

\subsection*{Paradox 3: Some people are wrong.}

Suppose all people in the room queue up, and the $k^{th}$ person says ``There are at least $k$ people wrong". Notice that if the $k^{th}$ person is right, then everyone before this person is also right.\footnote{Since the $k^{th}$ person is right, there are at least $k$ people wrong. Hence for any $j<k$, it is true that there are at least $j$ people wrong, which means the $j^{th}$ person is right.} Similarly, if the $k^{th}$ person is wrong, then everyone after this person is also wrong.

Using logic we know that either everyone is right or someone is wrong, in both cases the first person is right.\footnote{If everyone is right, then of course the first person is right; if someone is wrong, then at least one person is wrong, which is what the first person asserts.} Since the first person is right, someone must be wrong, and there must be someone who is the first person (in the queue) being wrong. Let this person be the $k^{th}$ person. By our observation everyone after her or him is wrong, so there are more than $k$ people wrong, but that means the $k^{th}$ person is right, and we have a contradiction.

\subsection*{Paradox 4: The Earliest Class Inspection Paradox}

Suppose you are a new teacher, and you are told that there will be a class inspection. There are two conditions on the date of the inspection: first, the sooner the better; second, you do not know and cannot guess the day so that you cannot prepare for it. Therefore, the inspection will be on the first day which you do not believe there will be a class inspection.

Now, the next school day is the first available day for class inspection, but then the inspection cannot be on that day since you can reason it. Similarly you can rule out the possibilities of the inspection being on the second day, the third day, the fourth day, and so on. Hence the earliest unexpected class inspection is impossible.

\subsection*{Notes on the paradoxes}

Here are some notes on the paradoxes.

The finite version of the first paradox, which is still a paradox, is related to the Liar cycles.\footnote{That is, the people are in a circle, and everyone says the next person is wrong, with an exception that if the number of people is even, then one of them says the next one is right.} However an existential quantifier is informally used in this paradox, and it can be extended to the infinite case easily, unlike the Liar cycles.

The finite version of the second paradox is again similar to the infinite case, that is, we have different consistent truth-value assignments. It is related to a paradox by Jean Buridan, which is ``Socrates says that Plato tells a lie, Plato says that Socrates tells a lie" (and they say nothing more), it is also called the No-No paradox in \cite{Sorensen2004}. In Sorensen's book, there is a finite version of the second paradox.\footnote{In almost the same form, except that he uses a list of 100 sentences, each of them is the sentence ``Some other sentence on this list is false".}

The finite version of the third paradox is not necessarily a paradox: if the number of people is even, then the first half people are right and the second half wrong; if the number of people is odd, then the one who is in the exact middle of the queue is in a Liar paradox situation. And the infinite version resembles the Yablo's paradox\footnote{Yablo's paradox is about an infinite list of sentences where every sentence is ``All sentences below are false". If the first sentence is true, then the second one is false, hence some sentence below it is true, contradicting the first one. On the other hand, if the first sentence is false, then some sentence below it is true, and we will get a similar contradiction.}

The Earliest Class Inspection Paradox can be regarded as an infinite version of the Surprise Examination Paradox.

\section{The Undecidable Sentences}

In \cite{Cieslinski2013-CIEGTY}, the authors apply Lemma \ref{Diag-lem} to an open formula to obtain undecidable sentences resembling Yablo's paradox. In the following, we will do the same for the paradoxes in the last section.

\subsection*{Formalizing Paradox 1}

Consider the open formula\footnote{The subformula $0 \leq x$ is to make sure the formula contains $x$ as a free variable. It is not difficult to prove that $0 \leq x$ is provable.}:
\begin{equation*}
\exists z \Big(Prov \big(Neg(Subs(y, \ulcorner x \urcorner, \ulcorner z \urcorner))\big) \Big) \land (0 \leq x)
\end{equation*}
By Lemma \ref{Diag-lem} there is an open formula $\mathbf{P}(x)$ with one free variable $x$ such that:
\begin{align*}
& \vdash \mathbf{P}(x) \longleftrightarrow \exists z \Big( Prov\big(Neg(Subs(\ulcorner \mathbf{P}(x) \urcorner, \ulcorner x \urcorner, \ulcorner z \urcorner)) \big) \Big) \land (0 \leq x) \\
\Rightarrow \, & \vdash \mathbf{P}(x) \longleftrightarrow \exists z \big(Prov(\ulcorner \neg \mathbf{P}(z) \urcorner) \big) \land (0 \leq y) \\
\Rightarrow \, & \vdash \mathbf{P}(x) \longleftrightarrow \exists z \big(Prov(\ulcorner \neg \mathbf{P}(z) \urcorner) \big)
\end{align*}

Here $\mathbf{P}(k)$ can be read as the sentence that the $k^{th}$ person says, which is, intuitively, ``There is someone whose sentence is refutable". Like the undecidable sentence in G\"odel original proof, which is a formalization of the Liar paradox, we replace ``truth" by ``provability", since the latter can be formulated in the object language.

We have the following result:

\begin{theorem}
For any natural number $k$, $\mathbf{P}(k)$ is undecidable.
\end{theorem}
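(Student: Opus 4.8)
The plan is to exploit the observation that the right-hand side of the final provable equivalence is a sentence: writing $Q$ for $\exists z\, Prov(\ulcorner \neg \mathbf{P}(z) \urcorner)$, the variable $x$ does not occur free in $Q$. Substituting any numeral $k$ for $x$ therefore yields $\vdash \mathbf{P}(k) \longleftrightarrow Q$, so all the sentences $\mathbf{P}(k)$ are provably equivalent to one another and to $Q$, and it suffices to prove that $Q$ itself is undecidable. Before doing so I would record that $Q$ is a $\Sigma_1$ sentence: the relations computed by $Neg$ and $Subs$ are $\Delta_0$, and $Prov$ is $\Sigma_1$, so $\varphi(z) := Prov(\ulcorner \neg \mathbf{P}(z) \urcorner)$ is $\Sigma_1$ with $z$ its only free variable, whence by Lemma \ref{sigma-1-lemma} the sentence $Q = \exists z\, \varphi(z)$ is $\Sigma_1$ as well.

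For non-provability I would argue by contradiction. Suppose $\vdash Q$. Since $Q = \exists z\, \varphi(z)$ with $\varphi$ a $\Sigma_1$ formula, Lemma \ref{omega-consist-lemma} supplies a number $n$ with $\vdash \varphi(n)$, that is, $\vdash Prov(\ulcorner \neg \mathbf{P}(n) \urcorner)$. By $Prov$-elimination (Lemma \ref{Prov-elim}) this gives $\vdash \neg \mathbf{P}(n)$, and since $\vdash \mathbf{P}(n) \longleftrightarrow Q$ we obtain $\vdash \neg Q$, contradicting $\vdash Q$ by consistency. Hence $Q$, and therefore every $\mathbf{P}(k)$, fails to be provable.

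For non-refutability, suppose $\vdash \neg \mathbf{P}(k)$ for some $k$. By $Prov$-introduction (Lemma \ref{Prov-intro}) we get $\vdash Prov(\ulcorner \neg \mathbf{P}(k) \urcorner)$, whence $\vdash \exists z\, Prov(\ulcorner \neg \mathbf{P}(z) \urcorner)$, i.e. $\vdash Q$. Using $\vdash \mathbf{P}(k) \longleftrightarrow Q$ we then obtain $\vdash \mathbf{P}(k)$, so both $\mathbf{P}(k)$ and $\neg \mathbf{P}(k)$ are provable, contradicting consistency. Thus $\mathbf{P}(k)$ is not refutable either, and the theorem follows.

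The step I expect to be the main obstacle is the non-provability direction, which is where the full strength of $\omega$-consistency is needed: one must pass from mere provability of the existential statement $Q$ to an actual numerical witness $n$, and this is exactly what Lemma \ref{omega-consist-lemma} delivers, contingent on having verified that $\varphi(z)$ really is $\Sigma_1$. By contrast, the non-refutability direction uses only plain consistency together with $Prov$-introduction. A secondary point to keep straight is the collapse of the whole family $\{\mathbf{P}(k)\}$ to the single sentence $Q$; once this is noticed, the paradoxical flavour of ``someone is wrong'' reduces to the self-referential behaviour of $Q$, and the two halves of the argument mirror the two horns of the informal paradox.
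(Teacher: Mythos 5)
Your proof is correct and follows the same basic template as the paper's: for non-provability, extract a numerical witness from the provable existential via $\omega$-consistency (Lemmas \ref{omega-consist-lemma} and \ref{sigma-1-lemma}); for non-refutability, use $Prov$-introduction (Lemma \ref{Prov-intro}) plus consistency. Two points of divergence are worth noting. First, your observation that $x$ does not occur free in $\exists z\, Prov(\ulcorner \neg \mathbf{P}(z) \urcorner)$, so that all the sentences $\mathbf{P}(k)$ collapse to a single sentence $Q$ provably equivalent to each of them, is not made in the paper; it is a genuine simplification and makes the symmetric character of the paradox explicit. Second, in the non-provability half the paper closes the argument by appealing to Lemma \ref{unprov-unprov}, a consequence of the Second Incompleteness Theorem, whereas you instead apply $Prov$-elimination (Lemma \ref{Prov-elim}) to the witness and use the equivalence with $Q$ to reach a direct violation of consistency. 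Your route is arguably cleaner: it avoids importing the Second Incompleteness Theorem at no extra cost, since $Prov$-elimination uses only the $\omega$-consistency already assumed (and the paper's intermediate claim at that point, that some $\neg Prov(\ulcorner \mathbf{P}(n) \urcorner)$ is provable, reads like a misprint for $Prov(\ulcorner \neg \mathbf{P}(n) \urcorner)$; your version shows how the argument is meant to run).
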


\begin{proof}
Let $k$ be a natural number. Suppose $\mathbf{P}(k)$ is provable. Then by the choice of $\mathbf{P}(x)$, $\exists z \big(Prov(\ulcorner \neg \mathbf{P}(z) \urcorner) \big)$ is also provable. By $\omega$-consistency of PA, Lemma \ref{omega-consist-lemma} and Lemma \ref{sigma-1-lemma}, there is a natural number $n$ such that $\neg Prov(\ulcorner \mathbf{P}(n) \urcorner)$ is provable. But by Lemma \ref{unprov-unprov} this is impossible.

On the other hand, suppose $\mathbf{P}(k)$ is refutable. Then:
\begin{align*}
\vdash \neg \mathbf{P}(k) \Rightarrow \, & \vdash \neg \exists z \big( Prov(\ulcorner \neg \mathbf{P}(z) \urcorner) \big) & \text{(By the choice of $\mathbf{P}(x)$)} \\
\Rightarrow \, & \vdash \forall z \neg Prov(\ulcorner \neg \mathbf{P}(z) \urcorner)\\
\Rightarrow \, & \vdash \neg Prov(\ulcorner \neg \mathbf{P}(k) \urcorner) & \text{(By universal instantiation)}
\end{align*}

While by Lemma \ref{Prov-intro}, we have $Prov(\ulcorner \neg \mathbf{P}(k) \urcorner)$ provable. Since we assume that PA is consistent, this is impossible.

Therefore for any natural number $k$, $\mathbf{P}(k)$ is neither provable nor refutable, hence undecidable.
\end{proof}

\subsection*{Formalizing Paradox 2}

Consider the following open formula:
\begin{equation*}
\exists z \Big(z \neq x \land Prov\big( Neg(Subs(y, \ulcorner x \urcorner, \ulcorner z \urcorner)) \big) \Big)
\end{equation*}
By Lemma \ref{Diag-lem}, there is an open formula $\mathbf{Q}(x)$ with one free variable $x$ such that
\begin{equation*}
\vdash \mathbf{Q}(x) \longleftrightarrow \exists z \big(z \neq x \land Prov(\ulcorner \neg \mathbf{Q}(z) \urcorner) \big)
\end{equation*}

Similar to the previous formalization, $\mathbf{Q}(k)$ can be read as the sentence that the $k^{th}$ person says, which is, intuitively, ``There is someone else whose sentence is refutable".

We have the following result:
\begin{theorem}
For any natural number $k$, $\mathbf{Q}(k)$ is undecidable. 
\end{theorem}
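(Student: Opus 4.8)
The plan is to mirror the two-case structure of the proof of the previous theorem, showing separately that $\mathbf{Q}(k)$ is neither refutable nor provable, while paying close attention to the clause $z \neq x$, which is precisely what distinguishes $\mathbf{Q}$ from $\mathbf{P}$ and which forces a genuinely different argument in the refutability case. Throughout, fix a natural number $k$, and abbreviate by ``the biconditional for $\mathbf{Q}$'' the equivalence obtained from Lemma~\ref{Diag-lem}, namely $\vdash \mathbf{Q}(x) \longleftrightarrow \exists z(z \neq x \land Prov(\ulcorner \neg \mathbf{Q}(z) \urcorner))$.

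For the refutability case, suppose $\vdash \neg \mathbf{Q}(k)$. Pushing this through the biconditional for $\mathbf{Q}$ gives $\vdash \neg \exists z(z \neq k \land Prov(\ulcorner \neg \mathbf{Q}(z)\urcorner))$, which is provably equivalent to $\vdash \forall z(z = k \lor \neg Prov(\ulcorner \neg \mathbf{Q}(z)\urcorner))$. In the previous theorem the corresponding universal statement could be instantiated at the very index $k$ and then contradicted using Lemma~\ref{Prov-intro} together with bare consistency; here that route is blocked, since instantiating at $z = k$ only produces the trivially true disjunct $k = k$. Instead I would instantiate at a fixed numeral $m$ with $m \neq k$ (say $m = k+1$), obtaining $\vdash m = k \lor \neg Prov(\ulcorner \neg \mathbf{Q}(m)\urcorner)$; since distinct numerals are provably unequal, $\vdash m \neq k$, and hence $\vdash \neg Prov(\ulcorner \neg \mathbf{Q}(m)\urcorner)$. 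But $\neg \mathbf{Q}(m)$ is a sentence, so Lemma~\ref{unprov-unprov} says $\neg Prov(\ulcorner \neg \mathbf{Q}(m)\urcorner)$ cannot be provable, a contradiction. Thus no $\mathbf{Q}(n)$ is refutable.

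For the provability case, suppose $\vdash \mathbf{Q}(k)$. By the biconditional for $\mathbf{Q}$, $\vdash \exists z(z \neq k \land Prov(\ulcorner \neg \mathbf{Q}(z)\urcorner))$, whence by weakening $\vdash \exists z\, Prov(\ulcorner \neg \mathbf{Q}(z)\urcorner)$. This is a $\Sigma_1$ sentence by Lemma~\ref{sigma-1-lemma}, so $\omega$-consistency via Lemma~\ref{omega-consist-lemma} yields a numeral $n$ with $\vdash Prov(\ulcorner \neg \mathbf{Q}(n)\urcorner)$; Lemma~\ref{Prov-elim} then gives $\vdash \neg \mathbf{Q}(n)$, i.e. $\mathbf{Q}(n)$ is refutable. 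This contradicts the refutability case just established. Hence $\mathbf{Q}(k)$ is not provable either, and combining the two cases, $\mathbf{Q}(k)$ is undecidable.

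I expect the refutability case to be the main obstacle. The essential point is that the clause $z \neq x$ removes the ``diagonal'' index from the effective scope of the universal quantifier, so one cannot close the argument with the elementary combination of $Prov$-introduction and consistency that sufficed for $\mathbf{P}$. One is forced to evaluate the unprovability assertion at a different index $m$, where Lemma~\ref{Prov-intro} gives no leverage, and is therefore pushed to appeal to the stronger Lemma~\ref{unprov-unprov}, which rests on G\"odel's Second Incompleteness Theorem. Getting this substitution step and the resulting dependence on Lemma~\ref{unprov-unprov} right is the crux; the provability case is then essentially identical to that of the previous theorem.
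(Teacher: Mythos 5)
Your proposal is correct and follows essentially the same route as the paper's own proof: in the refutability case you instantiate the universal statement at $k+1$ and invoke Lemma~\ref{unprov-unprov}, and in the provability case you use $\omega$-consistency with Lemmas~\ref{sigma-1-lemma}, \ref{omega-consist-lemma} and \ref{Prov-elim} to contradict the first half. The only (immaterial) difference is that you weaken away the conjunct $z \neq k$ before extracting a witness, while the paper keeps the conjunction and discards it afterwards.
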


\begin{proof}
Let $k$ be a natural number. Suppose $\mathbf{Q}(k)$ is refutable. Then:
\begin{align*}
\vdash \neg \mathbf{Q}(k) \Rightarrow \, & \vdash \neg \exists z \big( z \neq k \land Prov(\ulcorner \neg \mathbf{Q}(z) \urcorner) \big) & \text{(By the choice of $\mathbf{Q}(x)$)} \\
\Rightarrow \, & \vdash \forall z \big( Prov(\ulcorner \neg \mathbf{Q}(z) \urcorner)  \rightarrow z=k \big) \\
\Rightarrow \, & \vdash \forall z \big( z \neq k \rightarrow \neg Prov( \ulcorner \neg \mathbf{Q}(z) \urcorner)\big) \\
\Rightarrow \, & \vdash \big( k+1 \neq k \rightarrow \neg Prov(\ulcorner \neg \mathbf{Q}(k+1) \urcorner) \big) & \text{(By Universal Instantiation)}\\
\Rightarrow \, & \vdash \neg Prov(\ulcorner \neg \mathbf{Q}(k+1) \urcorner) & \text{($\vdash k+1 \neq k$)}
\end{align*}

But by Lemma \ref{unprov-unprov}, $\neg Prov(\ulcorner \neg \mathbf{Q}(k+1) \urcorner)$ is not provable. Hence $\mathbf{Q}(k)$ cannot be refutable.

On the other hand, suppose $\mathbf{Q}(k)$ is provable, then by the choice of $\mathbf{Q}(x)$, $\exists z \big( z \neq k \land Prov(\ulcorner \neg \mathbf{Q}(z) \urcorner) \big)$ is also provable. By $\omega$-consistency of PA, Lemma \ref{sigma-1-lemma} and Lemma \ref{omega-consist-lemma}, there is a natural number $n$ such that $Prov(\ulcorner \neg \mathbf{Q}(n) \urcorner)$ is provable.

By Lemma \ref{Prov-elim}, $\neg \mathbf{Q}(n)$ is also provable. But this contradicts the first half of this proof, hence $\mathbf{Q}(k)$ cannot be provable.

Therefore for any natural number $k$, $\mathbf{Q}(k)$ is undecidable.
\end{proof}

We have noted the similarity between paradox 2 and the Truth-teller paradox. Nevertheless, the Henkin sentence, the formalized Truth-teller, is provable by L\"ob's celebrated theorem, while the formalized version of paradox 2 above is undecidable.

\subsection*{Formalizing Paradox 3}

To formalize the third paradox, it is more complicated since we need to refer to a set of numbers (of size $k$) in the object language. So we need a two more definitions:

\begin{itemize}
\item $HetSeq(x) \longleftrightarrow Code(x) \land \big( \forall y \leq l(x) \big) \big( \forall z \leq l(x) \big) \big(y \neq z \rightarrow Dec(y,x) \neq Dec(z,x) \big)$ \\
If $HetSeq(x)$ is provable, then $x$ is a code number of a sequence where no two terms are the same.
\item $Ele(x,y) \longleftrightarrow Code(y) \land \big(\exists u \leq l(y) \big) \big( Dec(u, y)=x \big)$ \\ If $Ele(x,y)$ is provable, then $x$ represents a number which is a term of the sequence represented by $y$.
\end{itemize}
The idea is to define a kind of sequence, called heterosequence, in which no two terms are the same. Instead of saying there is a set of $k$ natural numbers, we can say there is a heterosequence of length $k$. Note that both $HetSeq(x)$ and $Ele(x,y)$ are $\Delta_0$.

Then consider the open formula:
\begin{equation*}
\exists z \Big[HetSeq(z) \land l(z)=Sx \land (\forall t \leq z) \big[ Ele(t,z) \rightarrow Prov\big( Neg(Subs(y, \ulcorner x \urcorner, \ulcorner t \urcorner))\big) \big] \Big]
\end{equation*}
Again by Lemma \ref{Diag-lem} there is an open formula $\mathbf{R}(x)$ such that:
\begin{equation*}
\vdash \mathbf{R}(x) \longleftrightarrow \exists z \Big(HetSeq(z) \land \big( l(z)=Sx \big) \land (\forall t \leq z) \big( Ele(t,z) \rightarrow  Prov(\ulcorner \neg \mathbf{R}(t) \urcorner) \big) \Big)
\end{equation*}

Intuitively, $\mathbf{R}(k)$ is provable if and only if there is a heterosequence of length $k+1$ such that for each element $t$ of that sequence, $\mathbf{R}(t)$ is refutable.\footnote{The length of the heterosequence is $k+1$, since we count the natural numbers from $0$.}

We have the following two lemmas:
\begin{lemma} \label{backward-heredity}
If $m, n$ are natural numbers and $m<n$, then $\vdash \mathbf{R}(n) \rightarrow \mathbf{R}(m)$.
\end{lemma}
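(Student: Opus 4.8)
The plan is to formalize inside PA the combinatorial observation underlying Paradox 3: that a witness for $\mathbf{R}(n)$ --- a heterosequence of $n+1$ ``refutable'' indices --- contains, as a sub-heterosequence, a witness for $\mathbf{R}(m)$ whenever $m < n$. I would argue by the deduction theorem: assume $\mathbf{R}(n)$ in PA, derive $\mathbf{R}(m)$, and then discharge the assumption to obtain $\vdash \mathbf{R}(n) \rightarrow \mathbf{R}(m)$.

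First I would apply the defining equivalence of $\mathbf{R}$ to the assumption $\mathbf{R}(n)$, extracting a witness $z$ with $HetSeq(z)$, $l(z) = Sn$, and $(\forall t \leq z)\big(Ele(t,z) \rightarrow Prov(\ulcorner \neg \mathbf{R}(t) \urcorner)\big)$. Since $m < n$ we have $Sm \leq Sn = l(z)$, so I can take $z'$ to be the code of the prefix consisting of the first $Sm$ terms of the sequence coded by $z$. The heart of the argument is to check, provably in PA, that this $z'$ satisfies: (i) $l(z') = Sm$; (ii) $HetSeq(z')$, since its terms are among the terms of $z$ and those are pairwise distinct; and (iii) $(\forall t)\big(Ele(t,z') \rightarrow Ele(t,z)\big)$, i.e.\ every element of $z'$ is an element of $z$.

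Granting these, the conclusion is quick. Take any $t \leq z'$ with $Ele(t,z')$; by (iii) we get $Ele(t,z)$, and since a term of a sequence is bounded by its code we have $t \leq z$, so instantiating the witness property of $z$ gives $Prov(\ulcorner \neg \mathbf{R}(t) \urcorner)$. Thus $(\forall t \leq z')\big(Ele(t,z') \rightarrow Prov(\ulcorner \neg \mathbf{R}(t) \urcorner)\big)$, and together with (i) and (ii) the sequence $z'$ witnesses the right-hand side of the equivalence defining $\mathbf{R}(m)$; hence $\mathbf{R}(m)$ follows. The main obstacle is establishing (ii) and (iii): defining the prefix operation on codes and verifying its length, heterogeneity, and element-containment properties purely in terms of $Code$, $l$, $Dec$, $HetSeq$ and $Ele$. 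This is routine coding bookkeeping rather than a conceptual difficulty, but it is where the real work lies; everything else is a direct manipulation of the diagonal equivalence.
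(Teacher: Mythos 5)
Your proposal is correct and rests on the same combinatorial idea as the paper's proof --- truncate the witness heterosequence for $\mathbf{R}(n)$ to its first $m+1$ terms to obtain a witness for $\mathbf{R}(m)$ --- but you execute it differently, and in a way that is better matched to what the lemma actually asserts. The paper argues at the metalevel: it supposes $\vdash \mathbf{R}(n)$, uses $\omega$-consistency to extract a standard numeral witness $N$, builds the prefix code $M$ outside the system, concludes $\vdash \mathbf{R}(m)$, and then invokes ``the deduction theorem'' to pass to $\vdash \mathbf{R}(n) \rightarrow \mathbf{R}(m)$. That last step is not a legitimate application of the deduction theorem, since the extraction of $N$ via $\omega$-consistency is not a derivation from the hypothesis $\mathbf{R}(n)$ inside PA, and a meta-implication of the form ``if $\vdash \varphi$ then $\vdash \psi$'' does not in general yield $\vdash \varphi \rightarrow \psi$. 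Your version keeps the entire argument inside PA: you existentially instantiate a free-variable witness $z$, apply a provably total prefix operation to obtain $z'$, and verify $l(z')=Sm$, $HetSeq(z')$, and element-containment formally, so the deduction theorem genuinely applies. What you buy is a sound derivation of the object-level implication, with no appeal to $\omega$-consistency at all; what it costs is the coding bookkeeping you rightly flag (the prefix function, and the fact that a term of a coded sequence is bounded by its code, which you need in order to instantiate the bounded quantifier $(\forall t \leq z)$), all of which is standard and available in PA.
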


\begin{proof}
Let $m,n$ be natural numbers and $m<n$. Suppose $\vdash \mathbf{R}(n)$. Then by the choice of $\mathbf{R}(x)$:

$\vdash \exists z \Big(HetSeq(z) \land \big(l(z)= Sn \big) \land (\forall t \leq z) \big( Ele(t,z) \rightarrow Prov(\ulcorner \neg  \mathbf{R}(t) \urcorner) \big) \Big)$

By $\omega$-consistency, there is a natural number $N$ such that

$\vdash \Big(HetSeq(N) \land \big(l(N)= Sn \big) \land (\forall t \leq N) \big( Ele(t,N) \rightarrow Prov(\ulcorner \neg \mathbf{R}(t) \urcorner) \big) \Big)$

$N$ is the code number of a heterosequence of length $n+1$, then we can take the first $m+1$ terms of the sequence to form a new heterosequence, and let its code number be $M$. By definition, both $l(b)= Sm$ and $Ele(t,M) \rightarrow Ele(t,N)$ are provable, hence

$\vdash \Big(HetSeq(M) \land \big(l(b)= Sm \big) \land (\forall t \leq M) \big( Ele(t,M) \rightarrow Prov(\ulcorner \neg \mathbf{R}(t) \urcorner) \big) \Big)$

Therefore $\vdash \mathbf{R}(m)$. By the deduction theorem we get $\vdash \mathbf{R}(n) \rightarrow \mathbf{R}(m)$.
\end{proof}

\begin{lemma} \label{hereditarily-refutable}
If $m, n$ are natural numbers and $m>n$, then $\vdash \neg \mathbf{R}(n) \rightarrow \neg \mathbf{R}(m)$.
\end{lemma}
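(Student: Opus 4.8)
The plan is to obtain this lemma almost for free from Lemma \ref{backward-heredity} by contraposition, rather than to rerun the heterosequence construction from scratch. The key observation is that the hypothesis $m > n$ is literally the same as $n < m$, and this is exactly the shape needed to invoke Lemma \ref{backward-heredity}, with the smaller number $n$ playing the role of its ``$m$'' and the larger number $m$ playing the role of its ``$n$''. Since that earlier lemma already does all the real work (the appeal to $\omega$-consistency and the truncation of a heterosequence), there is nothing further of that kind to prove.

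Concretely, I would first apply Lemma \ref{backward-heredity} to the pair $n < m$, which yields $\vdash \mathbf{R}(m) \rightarrow \mathbf{R}(n)$. Then, because we are reasoning inside classical first order PA, whenever PA proves an implication it also proves its contrapositive; applying this to the implication just obtained gives $\vdash \neg \mathbf{R}(n) \rightarrow \neg \mathbf{R}(m)$, which is precisely the desired conclusion. Thus the proof reduces to a single citation of the previous lemma followed by classical contraposition.

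This matches the informal reading of Paradox 3: Lemma \ref{backward-heredity} formalizes ``if a later person is right then every earlier person is right'', and the present lemma is its mirror image, ``if an earlier person is wrong then every later person is wrong'', where being wrong corresponds to refutability, i.e. to $\neg \mathbf{R}$ being provable. One could instead attempt a direct argument, unwinding the definition of $\mathbf{R}(m)$ to show that no heterosequence of length $m+1$ with the required refutability property can exist once $\neg \mathbf{R}(n)$ is available, but this merely duplicates the earlier construction and is strictly harder, so I would avoid it. The only real point demanding care is bookkeeping: Lemma \ref{backward-heredity} sends the \emph{larger} index to the \emph{smaller} one, so one must be sure not to invert the arguments, and its contrapositive correctly sends the negation at the smaller index to the negation at the larger index. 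Once the indices are tracked, there is essentially no obstacle.
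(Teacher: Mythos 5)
Your proposal is correct and matches the paper's own proof exactly: both apply Lemma \ref{backward-heredity} to the pair $n<m$ to get $\vdash \mathbf{R}(m)\rightarrow\mathbf{R}(n)$ and then pass to the contrapositive. Nothing further is needed.
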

\begin{proof}
By Lemma \ref{backward-heredity} we have $\vdash \mathbf{R}(m) \rightarrow \mathbf{R}(n)$, which implies the contrapositive of the formula, therefore $\vdash \neg \mathbf{R}(n) \rightarrow \neg \mathbf{R}(m)$.
\end{proof}

These two lemmas formalize our previous observations in the situation of paradox 3: ``if the $k^{th}$ person is right then everyone before him or her is right" and ``if the $k^{th}$ person is wrong then everyone after her or him is wrong".

Then we have the following result:

\begin{theorem}
For any natural number $n$, $\mathbf{R}(n)$ is undecidable.
\end{theorem}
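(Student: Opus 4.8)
The plan is to show separately that $\mathbf{R}(n)$ is neither refutable nor provable, mirroring the two halves of the preceding theorems. The Lemmas \ref{backward-heredity} and \ref{hereditarily-refutable} already encode the monotonicity observations of Paradox 3, so the remaining work is to combine them with the witnessing behaviour of the $\Sigma_1$ formula on the right-hand side of the defining equivalence for $\mathbf{R}$. I would establish non-refutability first, since the non-provability argument can then feed on it.

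For non-refutability, suppose toward a contradiction that $\vdash \neg \mathbf{R}(n)$ for some $n$. By Lemma \ref{hereditarily-refutable} we then have $\vdash \neg \mathbf{R}(m)$ for every $m \ge n$, and hence, by $Prov$-introduction (Lemma \ref{Prov-intro}), $\vdash Prov(\ulcorner \neg \mathbf{R}(m) \urcorner)$ for every such $m$. Now I would exhibit an explicit witness for $\mathbf{R}(n)$: let $M$ be the code number of the heterosequence $(n, n+1, \ldots, 2n)$, which has $n+1$ distinct entries, all $\ge n$. Since $HetSeq$, $l$ and $Ele$ are $\Delta_0$, PA proves $HetSeq(M)$ and $l(M) = Sn$, and for each specific $t \le M$ it proves either $\neg Ele(t,M)$ or else $Ele(t,M)$ with $t$ one of the entries, in which case $t \ge n$ and $Prov(\ulcorner \neg \mathbf{R}(t) \urcorner)$ is provable. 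Assembling these finitely many instances into the bounded universal statement $(\forall t \le M)\big(Ele(t,M) \rightarrow Prov(\ulcorner \neg \mathbf{R}(t)\urcorner)\big)$ yields a proof of the right-hand side of the equivalence defining $\mathbf{R}$, whence $\vdash \mathbf{R}(n)$. This contradicts consistency, so no $\mathbf{R}(n)$ is refutable.

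For non-provability, suppose $\vdash \mathbf{R}(n)$. The right-hand side of the defining equivalence is $\Sigma_1$: its matrix is a conjunction of $\Delta_0$ formulas with a bounded universal quantifier over a $\Sigma_1$ formula, which does not raise the complexity beyond $\Sigma_1$, and the leading existential quantifier is absorbed by Lemma \ref{sigma-1-lemma}. Hence by $\omega$-consistency and Lemma \ref{omega-consist-lemma} there is a concrete number $N$ with $\vdash HetSeq(N) \land l(N) = Sn \land (\forall t \le N)\big(Ele(t,N) \rightarrow Prov(\ulcorner \neg \mathbf{R}(t)\urcorner)\big)$. Decoding $N$ gives at least one entry $t_0$ with $\vdash Ele(t_0, N)$, whence $\vdash Prov(\ulcorner \neg \mathbf{R}(t_0)\urcorner)$ and, by $Prov$-elimination (Lemma \ref{Prov-elim}), $\vdash \neg \mathbf{R}(t_0)$. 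Thus $\mathbf{R}(t_0)$ is refutable, contradicting the first part. Therefore $\mathbf{R}(n)$ is not provable either, and is undecidable.

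The main obstacle I expect is the explicit construction in the non-refutability step: one must produce a numeral $M$ coding the chosen heterosequence and then verify, inside PA, that all three conjuncts of the defining matrix are provable for that $M$. The only non-mechanical point is passing from the instance-wise provability of $Ele(t,M) \rightarrow Prov(\ulcorner \neg \mathbf{R}(t)\urcorner)$ for each $t \le M$ to the provability of the bounded universal quantification over these instances; this rests on the standard fact that, for a fixed numeral $M$, PA proves $(\forall t \le M)\phi(t)$ once it proves each of $\phi(0), \ldots, \phi(M)$. Keeping track of the $\Sigma_1$ complexity so that Lemma \ref{omega-consist-lemma} genuinely applies in the non-provability step is the remaining bookkeeping detail to watch.
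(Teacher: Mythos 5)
Your proposal is correct and follows essentially the same route as the paper's own proof: refute refutability by explicitly coding the heterosequence $(n, n+1, \ldots, 2n)$ as a witness via Lemmas \ref{hereditarily-refutable} and \ref{Prov-intro}, then refute provability by extracting a numerical witness via $\omega$-consistency and feeding the resulting refutable instance back into the first half. Your version is slightly more explicit than the paper's at two points (assembling the bounded universal quantifier from its finitely many instances, and invoking Lemma \ref{Prov-elim} to close the second half), but these are refinements of the same argument, not a different one.
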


\begin{proof}
Let $n$ be a natural number. Suppose $\mathbf{R}(n)$ is refutable, then $\vdash \neg \mathbf{R}(n)$.

By Lemma \ref{hereditarily-refutable}, for every $m>n$, $\mathbf{R}(m)$ is refutable. Therefore the sentences $\neg \mathbf{R}(n), \neg \mathbf{R}(n+1), \ldots, \neg \mathbf{R}(n+n)$ are all provable, by Lemma \ref{Prov-intro} the sentences $Prov(\ulcorner \neg \mathbf{R}(n) \urcorner), Prov(\ulcorner \neg \mathbf{R}(n+1) \urcorner), \ldots, Prov(\ulcorner \neg \mathbf{R}(n+n) \urcorner)$ are also provable.

Let $c$ be the code number of the sequence $(n,n+1, \ldots, n+n)$. Then the sentences $HetSeq(c)$, $l(c)=Sn$, and $(\forall t \leq c) \big( Ele(t,c) \rightarrow Prov(\ulcorner \neg \mathbf{R}(t) \urcorner) \big)$ are all provable. This implies that $\mathbf{R}(n)$ is provable and we get another contradiction.

On the other hand, suppose $\mathbf{R}(n)$ is provable. Then
\begin{equation*}
\exists z \Big(HetSeq(z) \land \big(l(z)= Sn\big) \land (\forall t \leq z) \big( Ele(t,z) \rightarrow Prov(\ulcorner \neg \mathbf{R}(t) \urcorner) \big) \Big)
\end{equation*}
is also provable. By $\omega$-consistency, there is an number $c$ such that
\begin{align*}
& \vdash \Big(HetSeq(c) \land l(c)= Sn \land (\forall t \leq c) \big( Ele(t,c) \rightarrow Prov(\ulcorner \neg \mathbf{R}(t) \urcorner) \big) \Big) \\
\Rightarrow \,& \vdash (\forall t \leq c) \big( Ele(t,c) \rightarrow Prov(\ulcorner \neg \mathbf{R}(t) \urcorner))\big) \Big) \quad \qquad  \text{(By conjunction elimination)} \\
\Rightarrow \,& \vdash Prov (\ulcorner \neg \mathbf{R}(Dec(1,c) \urcorner) \qquad \qquad \qquad \qquad \qquad \qquad \qquad \qquad \qquad \, \text{(By fact 3)}
\end{align*}
But it is impossible by the first half of this proof, so $\mathbf{R}(n)$ is not provable. Therefore $\mathbf{R}(n)$ is neither provable nor refutable.
\end{proof}

\subsection*{Formalizing Paradox 4}
Consider the open formula:
\begin{equation*}
(\forall z < x) Prov \big(Neg(Subs(y,\ulcorner x \urcorner, \ulcorner z \urcorner))\big) \rightarrow Prov \big(Neg(y)\big)
\end{equation*}

Apply Lemma \ref{Diag-lem}, we will get an open formula $\mathbf{F}(x)$ with one variable $x$ such that:
\begin{equation*}
\vdash \mathbf{F}(x) \longleftrightarrow \big[ (\forall z < x) Prov \big(\ulcorner \neg \mathbf{F}(z) \urcorner \big) \rightarrow Prov \big(\ulcorner \neg \mathbf{F}(x) \urcorner) \big) \big] 
\end{equation*}

Roughly speaking, $\mathbf{F}(n)$ is related to the proposition ``there will be a class inspection at the $(n+1)^{st}$ day"\footnote{Note that we count from $0$.}. And it satisfies the condition that if it is deducible that there is no class inspection at the first $n$ day, then it is deducible that there is no class inspection at the $(n+1)^{st}$ day.

Then we have the following result:
\begin{theorem} \label{Surprise}
$\exists x \mathbf{F}(x)$ is undecidable.
\end{theorem}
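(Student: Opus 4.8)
The plan is to establish undecidability in the same two-step fashion as the previous theorems: first show that $\exists x \mathbf{F}(x)$ is not refutable, then that it is not provable. Throughout I would lean on one structural remark. By the choice of $\mathbf{F}$, together with the fact that PA decides every bounded quantifier, the sentence $\neg\mathbf{F}(n)$ is provably equivalent to $(\forall z < n)\, Prov(\ulcorner \neg \mathbf{F}(z)\urcorner) \land \neg Prov(\ulcorner \neg \mathbf{F}(n)\urcorner)$; in particular $\neg \mathbf{F}(n)$ provably entails its own rightmost conjunct $\neg Prov(\ulcorner \neg \mathbf{F}(n)\urcorner)$. This single observation drives the non-refutability half.

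For non-refutability I would argue as follows. Suppose $\exists x \mathbf{F}(x)$ were refutable, that is $\vdash \forall x \neg \mathbf{F}(x)$. Universal instantiation at $0$ gives $\vdash \neg \mathbf{F}(0)$; since there is no $z < 0$, the defining equivalence collapses to $\vdash \mathbf{F}(0) \longleftrightarrow Prov(\ulcorner \neg \mathbf{F}(0)\urcorner)$, so $\vdash \neg \mathbf{F}(0)$ yields $\vdash \neg Prov(\ulcorner \neg \mathbf{F}(0)\urcorner)$, contradicting Lemma \ref{unprov-unprov}. Hence $\exists x \mathbf{F}(x)$ is not refutable. The same computation applied to an arbitrary $n$ shows that \emph{no} individual $\mathbf{F}(n)$ is refutable, which records at the object level the failure of the paradox's naive claim that every day can be ruled out.

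For non-provability the strategy I would pursue is to reduce to $\omega$-consistency. Since $\mathbf{F}(x)$ is an implication between bounded-quantified $\Sigma_1$ pieces rather than a genuine $\Sigma_1$ formula, Lemma \ref{omega-consist-lemma} does not apply directly and I cannot simply extract a witness. Instead I would use $\omega$-consistency in its contrapositive form: it is impossible that $\exists x \mathbf{F}(x)$ be provable while $\mathbf{F}(n)$ is unprovable for every numeral $n$. Thus it would suffice to prove the auxiliary claim that no $\mathbf{F}(n)$ is provable, and then conclude $\not\vdash \exists x \mathbf{F}(x)$ at once. The base case is clean: $\vdash \mathbf{F}(0)$ would give $\vdash Prov(\ulcorner \neg \mathbf{F}(0)\urcorner)$ through the equivalence above, whence $\vdash \neg \mathbf{F}(0)$ by $Prov$-elimination (Lemma \ref{Prov-elim}), contradicting consistency.

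The inductive step of that auxiliary claim is the hard part, and I expect it to be the main obstacle. Assuming $\mathbf{F}(m)$ is unprovable for every $m < n$ (hence, with the first half, undecidable), one must rule out $\vdash \mathbf{F}(n)$. The difficulty is that $\mathbf{F}(n)$ is the implication $(\forall z < n)\, Prov(\ulcorner \neg \mathbf{F}(z)\urcorner) \rightarrow Prov(\ulcorner \neg \mathbf{F}(n)\urcorner)$, whose antecedent is exactly a conjunction of the statements $Prov(\ulcorner \neg \mathbf{F}(z)\urcorner)$ governed by the non-refutability half (each $\neg\mathbf{F}(z)$ being unprovable). Carrying the argument through therefore forces one to reason \emph{inside} PA about the provability of the earlier $\mathbf{F}(z)$, appealing to the formalized derivability conditions for $Prov$ rather than merely its external introduction and elimination rules; this internalization is precisely where care is required, since a careless treatment of the implication collapses the entire inductive scheme. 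This is the formal counterpart of the informal paradox, where the step ``because the first $n$ days are ruled out, day $n+1$ is ruled out'' is exactly what must be scrutinized most closely.
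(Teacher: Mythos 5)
Your non-refutability half is correct and is essentially the paper's argument: from $\vdash \forall x\,\neg\mathbf{F}(x)$ you extract $\vdash \neg\mathbf{F}(n)$, read off $\vdash \neg Prov(\ulcorner\neg\mathbf{F}(n)\urcorner)$ from the defining equivalence, and hit a contradiction (you invoke Lemma \ref{unprov-unprov}; the paper instead uses Lemma \ref{Prov-intro} plus consistency, which amounts to the same thing). Your further observation that no individual $\mathbf{F}(n)$ is refutable is also sound.

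The non-provability half, however, has a genuine gap, and you have correctly located it yourself: the inductive step of your auxiliary claim is not carried out, and on the route you have chosen it cannot be. Your inductive hypothesis is that each $\mathbf{F}(m)$ with $m<n$ is unprovable; but then (by $Prov$-elimination) each $Prov(\ulcorner\neg\mathbf{F}(m)\urcorner)$ is unprovable, and (by Lemma \ref{unprov-unprov}) each $\neg Prov(\ulcorner\neg\mathbf{F}(m)\urcorner)$ is unprovable too, so the antecedent $(\forall z<n)\,Prov(\ulcorner\neg\mathbf{F}(z)\urcorner)$ of $\mathbf{F}(n)$ is itself undecidable for $n\geq 1$. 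Knowing that PA proves an implication with an undecidable antecedent yields nothing, and no appeal to the formalized derivability conditions repairs this. The missing idea in the paper is the \emph{Least Number Principle}, stated as a proposition just before the proof: from $\vdash\exists x\,\mathbf{F}(x)$ one passes to $\vdash\exists x\,\bigl(\mathbf{F}(x)\land(\forall z<x)\neg\mathbf{F}(z)\bigr)$ and then, by $\omega$-consistency, obtains a numeral $n$ with $\vdash\mathbf{F}(n)\land(\forall z<n)\neg\mathbf{F}(z)$. The point of taking the \emph{least} witness is that each $\neg\mathbf{F}(m)$ with $m<n$ is now an actual theorem, so Lemma \ref{Prov-intro} makes the antecedent $(\forall z<n)\,Prov(\ulcorner\neg\mathbf{F}(z)\urcorner)$ provable; modus ponens then gives $\vdash Prov(\ulcorner\neg\mathbf{F}(n)\urcorner)$, whence $\vdash\neg\mathbf{F}(n)$ by Lemma \ref{Prov-elim}, contradicting consistency. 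This is exactly the leverage your induction lacks: it is the object theory's own proof that the earlier days are ruled out, not their mere external unprovability, that discharges the antecedent. (Your reduction via the contrapositive of $\omega$-consistency is legitimate under the paper's definition, but the natural proof that no $\mathbf{F}(n)$ is provable goes \emph{through} the unprovability of $\exists x\,\mathbf{F}(x)$, not the other way around.)
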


To prove this theorem, we need the following fact:
\begin{proposition}[The Least Number Principle]
For any open formula $P(x)$ with exactly one free variable $x$, it is provable that $\exists x P(x) \rightarrow \exists x \big( P(x) \land (\forall y<x) \neg P(y) \big)$.
\end{proposition}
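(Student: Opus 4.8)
The plan is to prove the contrapositive by an application of the induction schema, which is the standard route to the Least Number Principle (indeed the principle is essentially a reformulation of induction). Writing the target as $\exists x\, P(x) \rightarrow \exists x\,(P(x) \land (\forall y < x)\neg P(y))$, I would instead show that if no least witness exists then $P$ holds of nothing, i.e. I would derive $\forall x\,\neg P(x)$ from the negation of the consequent.

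First I would assume $\neg \exists x\,(P(x) \land (\forall y < x)\neg P(y))$. Pushing the negation inward, this is provably equivalent to $\forall x\,\big((\forall y < x)\neg P(y) \rightarrow \neg P(x)\big)$, and this will drive the inductive step. Next I would introduce the auxiliary formula $Q(x) :\equiv (\forall y < x)\neg P(y)$, which has exactly one free variable $x$, and prove $\forall x\, Q(x)$ by ordinary induction on $x$. The base case $Q(0)$, namely $(\forall y < 0)\neg P(y)$, is vacuously provable since $\vdash \neg (y < 0)$.

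For the inductive step I would assume $Q(x)$ and aim at $Q(Sx)$. Since $\vdash y < Sx \leftrightarrow (y < x \lor y = x)$, the formula $Q(Sx)$, that is $(\forall y < Sx)\neg P(y)$, is provably equivalent to $(\forall y < x)\neg P(y) \land \neg P(x)$, i.e. to $Q(x) \land \neg P(x)$. Instantiating the displayed assumption at $x$ gives $Q(x) \rightarrow \neg P(x)$, so from $Q(x)$ I obtain $\neg P(x)$, and conjoining yields $Q(Sx)$. Induction then gives $\forall x\, Q(x)$; since $Q(Sx)$ entails $\neg P(x)$ for each $x$, this delivers $\forall x\,\neg P(x)$, i.e. $\neg\exists x\, P(x)$, which is exactly the contrapositive of the desired implication.

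There is no deep obstacle here, and the only point requiring care is the bookkeeping for the bounded universal quantifier: verifying inside PA the passage from $(\forall y < x)\neg P(y)$ and $\neg P(x)$ to $(\forall y < Sx)\neg P(y)$, together with the vacuous base case $\vdash \neg (y < 0)$. Both are routine consequences of the basic axioms governing $<$ and $S$, so the substance of the argument is simply the choice of the right induction formula $Q(x)$.
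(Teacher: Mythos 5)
Your proof is correct. Note that the paper does not actually prove this proposition --- it explicitly skips the argument and defers to Boolos --- so there is no in-paper proof to compare against; your derivation is the standard one, obtaining the Least Number Principle from the induction schema applied to the auxiliary formula $Q(x) :\equiv (\forall y<x)\neg P(y)$. The two bookkeeping facts you flag, namely $\vdash \neg(y<0)$ and $\vdash y<Sx \leftrightarrow (y<x \lor y=x)$, are indeed the only points needing verification from the base axioms for $<$ and $S$, and the rest of the argument (the equivalence of the negated consequent with $\forall x((\forall y<x)\neg P(y) \rightarrow \neg P(x))$, the induction, and the final passage from $\forall x\,Q(x)$ to $\forall x\,\neg P(x)$ via $Q(Sx)$) is sound.
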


The proof of this principle, which can be found in \cite{Boolos1993-BOOTLO-3}, is skipped here. Now we can prove Theorem \ref{Surprise}.

\begin{proof}
Suppose $\exists x \mathbf{F}(x)$ is provable, then by the Least Number Principle, $\exists x \big( \mathbf{F}(x) \land (\forall z<x) \neg \mathbf{F}(z) \big)$ is also provable.

By the $\omega$-consistency, there is a natural number $n$ such that the formula $(\forall z < n) \neg \mathbf{F}(z) \land \mathbf{F}(n)$ is provable. Then $(\forall z < n) \neg \mathbf{F}(z)$ is provable, by substitution and modus ponens we have $\neg \mathbf{F}(0), \neg \mathbf{F}(1), \ldots, \neg \mathbf{F}(n-1)$ are all provable. Hence by Lemma \ref{Prov-intro}, $Prov(\ulcorner \neg \mathbf{F}(0) \urcorner), Prov(\ulcorner \neg \mathbf{F}(1) \urcorner), \ldots, Prov(\ulcorner \neg \mathbf{F}(n-1) \urcorner)$ are all provable, so is $(\forall z < n) Prov(\ulcorner \neg \mathbf{F}(z) \urcorner)$.

Since $\mathbf{F}(n)$ is also provable, we have:
\begin{align*}
\vdash \mathbf{F}(n) \Rightarrow \, & \vdash \big[ (\forall z < n) Prov \big(\ulcorner \neg \mathbf{F}(z) \urcorner \big) \rightarrow Prov \big(\ulcorner \neg \mathbf{F}(n) \urcorner) \big) \big] & \text{(By the choie of $\mathbf{F}(x)$)} \\
\Rightarrow \, & \vdash Prov \big(\ulcorner \neg \mathbf{F}(n) \big) & \text{(By modus ponens)} \\
\Rightarrow \, & \vdash \neg \mathbf{F}(n) & \text{(By Lemma \ref{Prov-elim})}
\end{align*}
So we get a contradiction, and $\exists x \mathbf{F}(x)$ is not provable.

On the other hand, suppose $\exists x \mathbf{F}(x)$ is refutable. Then $\neg \exists x \mathbf{F}(x)$, and equivalently, $\forall x \neg \mathbf{F}(x)$ are provable. For any natural number $n$, we have:
\begin{align*}
& \vdash \neg \mathbf{F}(n) & \text{(By universal instantiation)} \\
\Rightarrow \, & \vdash (\forall z < n)Prov \big( \ulcorner \neg \mathbf{F}(z) \urcorner \big) \land \neg Prov(\ulcorner \neg \mathbf{F}(n) \urcorner) & \text{(By the choice of $\mathbf{F}(x)$)} \\
\Rightarrow \, & \vdash \neg Prov(\ulcorner \neg \mathbf{F}(n) \urcorner) & \text{(By conjunction elimination)}
\end{align*}
But by Lemma \ref{Prov-intro}, $\neg \mathbf{F}(n)$ is provable implies that $Prov(\ulcorner \neg \mathbf{F}(n) \urcorner)$ is also provable. Again we get a contradiction.

Therefore, $\exists x \mathbf{F}(x)$ is undecidable.
\end{proof}

With some modifications on the definition of $\mathbf{F}(x)$, we can obtain a formalization of the Surprise Examination paradox which is essentially different from the one in \cite{Fitch1964-FITAGF}, since the former is undecidable but the latter is refutable.

\section{Summary}

We have seen four infinitary paradoxes, and four related open formulas, $\mathbf{P}(x)$, $\mathbf{Q}(x)$, $\mathbf{R}(x)$, and $\mathbf{F}(x)$. The first three open formulas lead to infinitely many undecidable sentences, and for the last one we have an undecidable sentence $\exists x \mathbf{F}(x)$.

These results partly confirm G\"odel's claim quoted in the first section, and refute a possible counterexample from \cite{Fitch1964-FITAGF}. Also, it is interesting to investigate whether there is any other paradox like the paradox 2, and to understand the difference between paradox 2 and the Truth-teller.

\bibliography{bib}
\bibliographystyle{apalike}
\end{document}